\newtheorem{lem}{Lemma}[section]
\newtheorem{prop}{Proposition}[section]
\newtheorem{defn}{Definition}[section]
\newtheorem{nb}{Remark}[section]
\numberwithin{equation}{section}
\begin{document}

\title[The Ooguri-Vafa metric and wall-crossing]
{The Ooguri-Vafa metric, holomorphic discs and wall-crossing}
\author[K.-W. Chan]{Kwokwai Chan}
\address{Department of Mathematics, Harvard University, Cambridge, MA 02138}
\email{kwchan@math.harvard.edu}

\begin{abstract}
Recently, Gaiotto, Moore and Neitzke \cite{GMN08} proposed a new
construction of hyperk\"{a}hler metrics. In particular, they gave a
new construction of the Ooguri-Vafa metric, in which they came
across certain formulas. We interpret those formulas as
wall-crossing formulas that appear in the SYZ construction of
instanton-corrected mirror manifolds. This reveals the close
relation between the Ooguri-Vafa metric and nontrivial holomorphic
discs with boundary in special Lagrangian torus fibers.
\end{abstract}

\maketitle

\tableofcontents

\section{Introduction}

This note grew out of an attempt to understand the relationship
between the new construction of the Ooguri-Vafa metric by Gaiotto,
Moore and Neitzke \cite{GMN08} and the wall-crossing formulas which
appear in the instanton-corrected construction of mirror manifolds
in examples investigated by Auroux \cite{Auroux07}, \cite{Auroux09}.

In their recent beautiful work \cite{GMN08}, Gaiotto, Moore and
Neitzke proposed a new construction of hyperk\"{a}hler metrics on
complex integrable systems. The simplest case of this construction
reproduces the Ooguri-Vafa metric.\footnote{The original
construction by Ooguri and Vafa \cite{OV96} was done by applying the
Gibbons-Hawking ansatz; see also Gross and Wilson \cite{GW00}.} This
sheds new light on the understanding of the metric. In particular,
one is naturally lead to certain formulas which resemble the
wall-crossing formulas in Auroux's examples of the construction of
instanton-corrected mirror manifolds \cite{Auroux07},
\cite{Auroux09}.

To connect these two constructions, we will study mirror symmetry
for the Ooguri-Vafa metric from the point of view of the SYZ
Conjecture \cite{SYZ96}. Recall that the na\"{i}ve SYZ construction
of mirror manifolds, namely, dualizing special Lagrangian torus
fibrations (or so-called T-duality in physics), in general does not
give the correct complex geometry of the mirror manifold. This is
due to the presence of singular fibers and nontrivial holomorphic
discs with boundary on special Lagrangian torus fibers. So we
usually need to modify the gluing of complex charts of the mirror
manifold by disc instanton corrections, according to certain
wall-crossing formulas. In this note, we show that the formulas
which appear in Gaiotto-Moore-Neitzke's construction of the
Ooguri-Vafa metric are \textit{equivalent} to the wall crossing
formulas which appear in Auroux's constructions. In particular, we
will be able to see how the construction of the Ooguri-Vafa metric
get contributions from corrections by disc instantons, i.e.
nontrivial holomorphic discs with boundary in special Lagrangian
torus fibers.

At this point, we shall mention that the idea to use wall-crossing
formulas (or gluing formulas) to study the SYZ version of mirror
symmetry was first suggested by Kontsevich and Soibelman in
\cite{KS04} (see also \cite{KS00}). These formulas were later
generalized and applied by Gross and Siebert \cite{GS07} in their
construction of toric degenerations of Calabi-Yau manifolds and
their mirrors from affine manifolds with singularities. The
wall-crossing formulas which appear here and in Auroux's examples in
\cite{Auroux07}, \cite{Auroux09} are all special cases of the gluing
formulas used by Kontsevich-Soibelman and Gross-Siebert.

We shall also emphasis that in all cases, the wall-crossing
phenomena are of the same kind, although there are two kinds of wall
as distinguished by Kontsevich and Soibelman in \cite{KS08}. The
formulas which we are referring to here and those in Auroux's works
correspond to crossing \textit{the wall of second kind} (see p. 27
in \cite{KS08}); while the formulas in Gaiotto-Moore-Neitzke
\cite{GMN08} correspond to crossing \textit{the wall of first kind}
(see p.30 in \cite{KS08}).\footnote{In the physics literature, the
wall of first kind is called \textit{the wall of marginal
stability}.} In fact, wall-crossing formulas of first kind, which
describe the jumping behavior of \textit{numerical Donaldson-Thomas
invariants}, play a key role in Gaiotto-Moore-Neitzke's construction
of more general hyperk\"{a}hler metrics. However, in the Ooguri-Vafa
case, the numerical Donaldson-Thomas invariants do not jump (see
Remark~\ref{rmk3.2}), so the wall-crossing formula of first kind is
trivial.\footnote{In the language of Gross-Siebert \cite{GS07},
there are no \textit{scattering} in the Ooguri-Vafa case, since
there is only one singular point, and hence one wall, in the base
affine manifold.} It is interesting to understand the relationship
between wall-crossing formulas of first kind and the construction of
instanton-corrected mirror manifolds. In particular, it is desirable
to know what the numerical Donaldson-Thomas invariants are counting.

The rest of this note is organized as follows. In the next section,
we briefly review Gaiotto-Moore-Neitzke's new construction of the
Ooguri-Vafa metric. In Section~\ref{sec3}, we proceed to explain why
the wall-crossing formulas which appear in Section~\ref{sec2} can be
interpreted as wall-crossing formulas which appear in Auroux's
construction of instanton-corrected mirror manifolds.\\

\noindent\textbf{Acknowledgements.} I am very grateful to Prof.
Shing-Tung Yau for suggesting this problem, to Andy Neitzke for
carefully explaining their constructions, and to Prof. Yan Soibelman
for answering my questions through emails and pointing out many
inaccuracies in earlier versions of this article. I would also like
to thank Prof. Denis Auroux, Prof. Mark Gross, Prof. Naichung Conan
Leung and Prof. Eric Zaslow for many useful discussions and help.
This research was supported by Harvard University and the Croucher
Foundation Fellowship.

\section{The new construction of the Ooguri-Vafa metric by Gaiotto-Moore-Neitzke
\cite{GMN08}}\label{sec2}

This section is a very brief review of the construction of the
Ooguri-Vafa metric using the method proposed by Gaiotto, Moore and
Neitzke in their recent work \cite{GMN08}. For more details and
construction of hyperk\"{a}hler metrics on general complex
integrable systems, we refer the reader to the original paper
\cite{GMN08}.\\

Let $B=\{b\in\mathbb{C}:|b|<r\}$ be the open disc centered at the
origin with radius $r>0$, and let $B'=B\setminus\{0\}$. Suppose that
$\psi:M\rightarrow B$ is an elliptic fibration with a type $I_1$
singular fiber over $0\in B$. Consider the local system
$\Gamma=R^1\psi_*\mathbb{Z}\rightarrow B$, the generic fiber of
which is given by $\Gamma_b\cong H^1(M_b,\mathbb{Z})$, where
$M_b=\psi^{-1}(b)$ is the fiber over $b\in B'$. The monodromy of
$\Gamma$ around $0\in B$ is nontrivial and given by
\begin{eqnarray*}
\gamma_e(b) & \mapsto & \gamma_e(b),\\
\gamma_m(b) & \mapsto & \gamma_m(b)+\gamma_e(b),
\end{eqnarray*}
where $\{\gamma_e(b),\gamma_m(b)\}$ is a symplectic basis of
$H^1(M_b,\mathbb{Z})$.\footnote{The subscripts "e" and "m" stand for
"electric" and "magnetic" respectively, and $\Gamma$ is called the
charge lattice in \cite{GMN08}.} This basis extends to local
sections $\gamma_e,\gamma_m$ of $\Gamma$ over a small enough open
subset $U\subset B'$. Since
$M_U=\Gamma_U^\vee\otimes_\mathbb{Z}(\mathbb{R}/2\pi\mathbb{Z})$,
the sections $\gamma_e,\gamma_m$ define local fiber coordinates
$$\theta_e,\theta_m:M_U\rightarrow\mathbb{R}/2\pi\mathbb{Z},$$
so that we have $d\theta_e|_{M_b}=\gamma_e(b)$,
$d\theta_m|_{M_b}=\gamma_m(b)$. Note that $\theta_e$ can be extended
to a global function on $M$, while $\theta_m$ cannot because of the
nontrivial monodromy $\theta_m\mapsto\theta_m+\theta_e$.

To construct a hyperk\"{a}hler metric on $M$, we define a
homomorphism $Z:\Gamma\rightarrow\mathbb{C}$ by setting
\begin{eqnarray*}
Z(\gamma_e(b)) & = & b,\\
Z(\gamma_m(b)) & = & \frac{1}{2\pi i}(b\log\frac{b}{r}-b).
\end{eqnarray*}
$Z$ is called the \textit{central charge} in the physics literature.
The functions $Z_e:=Z(\gamma_e(b))$, $Z_m:=Z(\gamma_m(b))$ are
defined in this way so that they are compatible with the monodromy
of $\theta_e,\theta_m$ respectively. Then, we can define two
families of $\mathbb{C}^*$-valued functions
$\chi_e^\textrm{sf}(\zeta),\chi_m^\textrm{sf}(\zeta)$ locally on
$M$:
\begin{eqnarray*}
\chi_e^\textrm{sf}(\zeta) & = &
\exp\Bigg[\frac{\pi}{\epsilon}(\zeta^{-1}Z_e+\zeta\bar{Z}_e)+i\theta_e\Bigg],\\
\chi_m^\textrm{sf}(\zeta) & = &
\exp\Bigg[\frac{\pi}{\epsilon}(\zeta^{-1}Z_m+\zeta\bar{Z}_m)+i\theta_m\Bigg],
\end{eqnarray*}
parameterized by $\zeta\in\mathbb{C}^*$. Here, $\epsilon>0$ is a
constant. The functions
$\chi_e^\textrm{sf}(\zeta),\chi_m^\textrm{sf}(\zeta)$ give the
so-called \textit{semi-flat} local coordinates on $M$. Notice that
the coordinate $\chi_e^\textrm{sf}(\zeta)$ extends to a global
holomorphic function on $M$, while $\chi_m^\textrm{sf}(\zeta)$ has
nontrivial monodromy around $0\in B$ given by
$\chi_m^\textrm{sf}(\zeta)\mapsto\chi_e^\textrm{sf}(\zeta)\chi_m^\textrm{sf}(\zeta)$.

Now, consider the two-forms
$$\Omega^\textrm{sf}(\zeta)=\frac{d\chi_e^\textrm{sf}(\zeta)}{\chi_e^\textrm{sf}(\zeta)}
\wedge\frac{d\chi_m^\textrm{sf}(\zeta)}{\chi_m^\textrm{sf}(\zeta)},\
\zeta\in\mathbb{C}^*$$ on $M$.\footnote{The definitions of the
holomorphic two-forms $\Omega^\textrm{sf}(\zeta)$ and
$\Omega(\zeta)$ here differ from those in \cite{GMN08} by
multiplication by the constant $-\epsilon/4\pi^2$.} In \cite{GMN08},
it was checked that this family of two-forms
$\{\Omega^\textrm{sf}(\zeta):\zeta\in\mathbb{C}^*\}$ satisfies all
the hypotheses in the theorem of Hitchin et al \cite{HKLR87},
\cite{Hitchin92} and concluded that $M\times\mathbb{C}P^1$ equipped
with $\{\Omega^\textrm{sf}(\zeta):\zeta\in\mathbb{C}^*\}$ is the
\textit{twistor space} of a hyperk\"{a}hler metric $g^\textrm{sf}$
on $M$. However, since $\chi_m^\textrm{sf}(\zeta)$ is not globally
defined on $M$, this semi-flat metric $g^\textrm{sf}$ is singular at
a point over $b=0\in B$.

To obtain a smooth hyperk\"{a}hler metric on $M$, Gaiotto, Moore and
Neitzke argued that we should modify the function
$\chi_m^\textrm{sf}(\zeta)$ by instanton corrections. (We need not
correct the function $\chi_e^\textrm{sf}(\zeta)$ and thus we shall
set $\chi_e(\zeta)=\chi_e^\textrm{sf}(\zeta)$.) They did so by
solving a Riemann-Hilbert problem which is described as follows.
Consider the following rays in the $\zeta$-plane.
\begin{eqnarray*}
l_+ & = & \{\zeta\in\mathbb{C}^*:b/\zeta\in\mathbb{R}_{<0}\},\\
l_- & = & \{\zeta\in\mathbb{C}^*:b/\zeta\in\mathbb{R}_{>0}\}.
\end{eqnarray*}
These are called the \textit{BPS rays} corresponding to the central
charge $Z_e$. The Riemann-Hilbert problem then asks for a family of
holomorphic functions $\{\chi_m(\zeta):\zeta\in\mathbb{C}^*\}$ on
$M$, which are piecewise holomorphic in $\zeta\in\mathbb{C}^*$, such
that the following two conditions are satisfied.\footnote{See
Section 4.4 in \cite{GMN08} for details; due to the choice of the
monodromy, our formulas (\ref{wc+}), (\ref{wc-}) differ from the
formulas (4.52a), (4.52b) on p.16 of \cite{GMN08} by a sign.}
\begin{enumerate}
\item[(a)]
$\chi_m(\zeta)$ is discontinuous across the BPS rays $l_\pm$ in the
following way: Let $(\chi_m(\zeta))^+_{l_+}$,
$(\chi_m(\zeta))^-_{l_+}$ be the limit of $\chi_m(\zeta)$ as $\zeta$
approaches $l_+$ in the clockwise and counter-clockwise direction
respectively, and similarly, $(\chi_m(\zeta))^+_{l_-}$,
$(\chi_m(\zeta))^-_{l_-}$ be the limit of $\chi_m(\zeta)$ as $\zeta$
approaches $l_-$ in the clockwise and counter-clockwise direction
respectively. Then we require that
\begin{eqnarray}\label{wc+}
(\chi_m(\zeta))^-_{l_+} & = &
(\chi_m(\zeta))^+_{l_+}(1+\chi_e(\zeta)),\\
(\chi_m(\zeta))^+_{l_-} & = &
(\chi_m(\zeta))^-_{l_-}(1+\chi_e^{-1}(\zeta)).\label{wc-}
\end{eqnarray}

\item[(b)]
Let
$$\Upsilon(\zeta)=\chi_m(\zeta)\exp\Bigg[-\frac{\pi}{\epsilon}(\zeta^{-1}Z_m+\zeta\bar{Z}_m)\Bigg].$$
Then we require that the limit of $\Upsilon(\zeta)$ as
$\zeta\rightarrow0$ and $\zeta\rightarrow\infty$ exists, and the
limits are related by
$$\lim_{\zeta\rightarrow0}\Upsilon(\zeta)=\overline{\lim_{\zeta\rightarrow\infty}\Upsilon(\zeta)}.$$
\end{enumerate}
It is ingenious that Gaiotto, Moore and Neitzke were able to write
down the following beautiful and explicit formula for
$\chi_m(\zeta)$ in \cite{GMN08}.\footnote{The generalization of this
formula, which is an integral equation satisfied by the functions
$\chi_\gamma(\zeta)$, turns out to be the key in the general
construction of hyperk\"{a}hler metrics on general complex
integrable systems. In particular, one can obtain successive
approximations of the desired hyperk\"{a}hler metric by iteratively
solving the integral equation.}
\begin{eqnarray*}
\chi_m(\zeta) & = & \chi_m^\textrm{sf}(\zeta)\exp\frac{i}{4\pi}
\Bigg[\int_{l_+}\log(1+\chi_e(\zeta'))\frac{\zeta'+\zeta}{\zeta'-\zeta}\frac{d\zeta'}{\zeta'}\\
&   & \qquad\qquad\qquad\qquad
-\int_{l_-}\log(1+\chi_e(\zeta')^{-1})\frac{\zeta'+\zeta}{\zeta'-\zeta}\frac{d\zeta'}{\zeta'}\Bigg].
\end{eqnarray*} Now, the family of two forms
$$\Omega(\zeta)=\frac{d\chi_e(\zeta)}{\chi_e(\zeta)}\wedge\frac{d\chi_m(\zeta)}{\chi_m(\zeta)}$$
on $M$ again satisfies the hypotheses the theorem of Hitchin et al,
and hence defines a smooth hyperk\"{a}hler metric $g$ on $M$ (which
can be determined explicitly from the family of two-forms
$\Omega(\zeta)$, $\zeta\in\mathbb{C}^*$). Furthermore, Gaiotto,
Moore and Neitzke verified that this is nothing but the Ooguri-Vafa
metric constructed by the Gibbons-Hawking ansatz \cite{OV96},
\cite{GW00}.

\section{Holomorphic discs, wall-crossing and mirror symmetry}\label{sec3}

In this section, we study mirror symmetry for the Ooguri-Vafa metric
from the viewpoint of the SYZ Conjecture \cite{SYZ96} and interpret
the formulas (\ref{wc+}), (\ref{wc-}), which describe the
discontinuity of the function $\chi_m(\zeta)$ across the BPS rays
$l_\pm$, as wall-crossing formulas which appear in the SYZ
construction of the instanton-corrected mirror manifold, following
the approach of Auroux (see Section 5 in \cite{Auroux07} and Section
3 in \cite{Auroux09}). These wall-crossing phenomena are special
cases of those studied first by Kontsevich and Soibelman in
\cite{KS04}, which also played a crucial role in the foundational
work of Gross and Siebert \cite{GS07}.\\

To begin with, recall that we have a family of two-forms
$\{\Omega(\zeta):\zeta\in\mathbb{C}^*\}$ on $M$. For each
$\zeta\in\mathbb{C}^*$, $\Omega(\zeta)$ is holomorphic with respect
to a complex structure $J(\zeta)$, and there is a corresponding
K\"{a}hler form $\omega(\zeta)$. We want to write down a formula for
$\omega(\zeta)$. To do this, recall that, in the Gibbons-Hawking
ansatz, the hyperk\"{a}hler metric $g$ on $M$ is determined by a
triplet of symplectic forms
\begin{eqnarray*}
\omega_1 & = & db_1\wedge\alpha+Vdb_2\wedge db_3,\\
\omega_2 & = & db_2\wedge\alpha+Vdb_3\wedge db_1,\\
\omega_3 & = & db_3\wedge\alpha+Vdb_1\wedge db_2,
\end{eqnarray*}
where $b=b_1+ib_2\in B$,
$b_3=\frac{\epsilon\theta_e}{2\pi}\in\mathbb{R}/\epsilon\mathbb{Z}$,
$V=V(b_1,b_2,b_3)$ is a positive harmonic function on
$(B\times\mathbb{R}\setminus\{0\}\times\epsilon\mathbb{Z})/\epsilon\mathbb{Z}$,
and $\alpha$ is a connection one-form on $M$ (which can be realized
as a partial compactification of a circle bundle over
$(B\times\mathbb{R}\setminus\{0\}\times\epsilon\mathbb{Z})/\epsilon\mathbb{Z}$)
of the form $\frac{d\theta_m}{2\pi}+A(b_1,b_2,b_3)$ which satisfies
$d\alpha=dA=\star dV$. There are explicit formulas for $V$ and
$\alpha$, see Remark~\ref{rmk3.1}. The symplectic form
$\omega(\zeta)$, which is K\"{a}hler with respect to $J(\zeta)$, is
then given by
$$\omega(\zeta)=\frac{4\pi^2}{\epsilon}\Bigg[\frac{i(\bar{\zeta}\omega_+-\zeta\omega_-)
+(1-|\zeta|^2)\omega_3}{1+|\zeta|^2}\Bigg],$$ where
$\omega_\pm=\omega_1\pm i\omega_2$. We also have
\begin{equation}\label{Omega}
\Omega(\zeta)=-\frac{4\pi^2}{\epsilon}\Bigg[\frac{1}{2i}(\zeta^{-1}\omega_++\zeta\omega_-)+\omega_3\Bigg].
\end{equation}

Now, we shall fix $\zeta\in\mathbb{C}^*$ and denote by $M(\zeta)$
the manifold $M$ equipped with the K\"{a}hler form $\omega(\zeta)$
and the holomorphic two-form $\Omega(\zeta)$. We want to study the
SYZ mirror symmetry for $M(\zeta)$. The first step is to construct a
special Lagrangian torus fibration. Consider the $S^1$-action on $M$
given by rotating the angle coordinate $\theta_m$:
$$e^{it}\cdot(b_1,b_2,\theta_e,\theta_m)=(b_1,b_2,\theta_e,\theta_m+t).$$
\begin{lem}
This $S^1$-action is Hamiltonian with respect to $\omega(\zeta)$
when $|\zeta|=1$, and the moment map is then given by
$$\mu_{S^1}=\frac{2\pi}{\epsilon}\textrm{Im}(\bar\zeta b):M\rightarrow\mathbb{R}.$$
\end{lem}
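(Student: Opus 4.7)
The plan is a direct computation using the Gibbons-Hawking expressions for $\omega_1, \omega_2, \omega_3$ recalled just before the lemma. Let $X = \partial/\partial\theta_m$ denote the generating vector field of the $S^1$-action. To show the action is Hamiltonian with the claimed moment map, I need to verify that $\iota_X \omega(\zeta) = d\mu_{S^1}$.

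First, I would simplify $\omega(\zeta)$ under the assumption $|\zeta|=1$. Writing $\zeta = \zeta_1 + i\zeta_2$, the coefficient of $\omega_3$ vanishes since $1-|\zeta|^2 = 0$, and expanding $\omega_\pm = \omega_1 \pm i\omega_2$ gives
$$\omega(\zeta) = -\frac{4\pi^2}{\epsilon}(\zeta_1 \omega_2 - \zeta_2 \omega_1).$$
So it suffices to compute $\iota_X \omega_1$ and $\iota_X \omega_2$.

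Next, using $\alpha = \tfrac{d\theta_m}{2\pi} + A(b_1,b_2,b_3)$ where $A$ is independent of $\theta_m$, we have $\iota_X \alpha = \tfrac{1}{2\pi}$ and $\iota_X db_j = 0$ for $j=1,2,3$. Therefore, from $\omega_j = db_j \wedge \alpha + V\, db_k \wedge db_l$ (with $(j,k,l)$ a cyclic permutation of $(1,2,3)$),
$$\iota_X \omega_j = -\frac{db_j}{2\pi}, \quad j = 1,2,3.$$
Substituting yields
$$\iota_X \omega(\zeta) = \frac{2\pi}{\epsilon}(\zeta_1\, db_2 - \zeta_2\, db_1).$$
Finally, since $\bar\zeta b = (\zeta_1 b_1 + \zeta_2 b_2) + i(\zeta_1 b_2 - \zeta_2 b_1)$, we recognize $\zeta_1\, db_2 - \zeta_2\, db_1 = d(\operatorname{Im}(\bar\zeta b))$, so $\iota_X \omega(\zeta) = d\mu_{S^1}$ with $\mu_{S^1}$ as stated.

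There is no real obstacle here: once the form of $\omega(\zeta)$ is read off from the formula given just above the lemma and the $\theta_m$-independence of $V$ and $A$ in the Gibbons-Hawking ansatz is invoked, the identification is a short algebraic check. The only thing worth double-checking is the sign convention (the factor $4\pi^2/\epsilon$ and the sign in $\iota_X(db_j \wedge \alpha) = -db_j \cdot \iota_X\alpha$), but these fall into place to produce exactly the coefficient $\tfrac{2\pi}{\epsilon}$ appearing in the claimed moment map.
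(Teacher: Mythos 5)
Your proof is correct and follows essentially the same route as the paper: a direct contraction of $\partial/\partial\theta_m$ into the Gibbons--Hawking expression for $\omega(\zeta)$, using the $\theta_m$-independence of $V$ and $A$. The only cosmetic difference is that you impose $|\zeta|=1$ at the outset, whereas the paper computes $\iota_{\partial/\partial\theta_m}\omega(\zeta)$ for general $\zeta$ and then observes that the resulting closed form has a single-valued primitive exactly when $|\zeta|=1$ (otherwise the primitive involves the angle $\theta_e$); since the lemma only asserts the Hamiltonian property for $|\zeta|=1$, your argument establishes the stated claim.
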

\begin{proof}
It is clear that the $S^1$-action preserves $\omega(\zeta)$. By a
straightforward computation, we have
\begin{eqnarray*}
\omega(\zeta) & = &
\frac{4\pi^2}{\epsilon}d\Bigg[\frac{-2\textrm{Im}(\bar{\zeta}b)+(1-|\zeta|^2)\frac{\epsilon\theta_e}{2\pi}}{1+|\zeta|^2}
\Bigg]\wedge\Bigg[\frac{d\theta_m}{2\pi}+A\Bigg]\\
& &
\qquad\qquad+\frac{4\pi^2}{\epsilon}Vd\textrm{Re}(\bar{\zeta}b)\wedge
d\Bigg[\frac{\frac{\epsilon\theta_e}{\pi}+(1-|\zeta|^2)\textrm{Im}(\bar{\zeta}b)}{1+|\zeta|^2}\Bigg].
\end{eqnarray*}
Hence,
$$\iota_{\frac{\partial}{\partial\theta_m}}\omega(\zeta)=\frac{2\pi}{\epsilon}
d\Bigg[\frac{2\textrm{Im}(\bar{\zeta}b)-(1-|\zeta|^2)\frac{\epsilon\theta_e}{2\pi}}{1+|\zeta|^2}\Bigg],$$
which is exact when $|\zeta|=1$, and the moment map is given by
$$\mu_{S^1}=\frac{2\pi}{\epsilon}\textrm{Im}(\bar{\zeta}b).$$
\end{proof}
In view of the above lemma, we shall from now on fix a $\zeta$ such
that $|\zeta|=1$.

Recall that we have a globally defined coordinate
$$\chi_e(\zeta)=\exp\Bigg[\frac{2\pi}{\epsilon}\textrm{Re}(\bar{\zeta}b)+i\theta_e\Bigg]:M\rightarrow\mathbb{C}^*,$$
which is holomorphic with respect to the complex structure
$J(\zeta)$.
\begin{defn}
For $(s,\lambda)\in\mathbb{R}^2$, define
$$T_{s,\lambda}=\{(b_1,b_2,\theta_e,\theta_m)\in M:\log|\chi_e(\zeta)|=s,\mu_{S^1}=\lambda\}.$$
\end{defn}
For $(s,\lambda)\neq(0,0)$, $T_{s,\lambda}$ is a torus embedded in
$M$, and $T_{0,0}$ is nodal. Now, the reduced space
$M_{red,\lambda}=\mu_{S^1}^{-1}(\lambda)/S^1$ is topologically an
annulus, and from formula (\ref{Omega}), we can see that the reduced
holomorphic volume form is given by
$$\Omega(\zeta)_{red,\lambda}=\iota_{\frac{\partial}{\partial\theta_m}}\Omega(\zeta)
=-id\log\chi_e(\zeta).$$ Thus, by Theorem 1.2 in Gross
\cite{Gross00}, we have the following result (see also Proposition
5.2 in Auroux \cite{Auroux07}).
\begin{prop}
Each $T_{s,\lambda}$ is special Lagrangian in $M$ with respect to
$\omega(\zeta),\Omega(\zeta)$. Hence, the map
$\Psi:M(\zeta)\rightarrow\mathbb{R}^2$ defined by
$$\Psi=(\log|\chi_e(\zeta)|,\mu_{S^1})$$
gives a special Lagrangian torus fibration, with a single nodal
fiber $T_{0,0}$.
\end{prop}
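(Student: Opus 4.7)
The plan is to invoke Gross's Theorem 1.2 from \cite{Gross00}: given a Hamiltonian $S^1$-action preserving both $\omega$ and $\Omega$, the preimage in $\mu^{-1}(\lambda)$ of any special Lagrangian in the symplectic reduction $M_{red,\lambda}$, with respect to the reduced K\"ahler form and the reduced holomorphic volume form $\iota_{\partial/\partial\theta_m}\Omega$, is special Lagrangian in $M$. With this machinery, the proposition reduces to verifying the hypotheses and then exhibiting the right one-dimensional special Lagrangians in the annulus downstairs.

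First I would confirm that $\Omega(\zeta)$ is $S^1$-invariant. This is immediate: $\chi_e(\zeta)$ does not depend on $\theta_m$, and $\theta_m$ enters $\chi_m^{\textrm{sf}}(\zeta)$ only through the overall factor $e^{i\theta_m}$, while the instanton correction in the integral formula for $\chi_m(\zeta)$ involves neither $\theta_m$ nor $\theta_e$ (it depends on $\zeta$ and on $\chi_e(\zeta')$ only). Hence $\mathcal{L}_{\partial/\partial\theta_m}\log\chi_m(\zeta)=i$ is constant and $\mathcal{L}_{\partial/\partial\theta_m}\Omega(\zeta)=0$. Combined with the preceding lemma, for $|\zeta|=1$ both $\omega(\zeta)$ and $\Omega(\zeta)$ are $S^1$-invariant and $\mu_{S^1}$ is a moment map, so the hypotheses of Gross's theorem are met.

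Next, the excerpt already records $\Omega(\zeta)_{red,\lambda}=-id\log\chi_e(\zeta)$ and that $M_{red,\lambda}$ is topologically an annulus. Since the reduced space has complex dimension one, any real curve in it is Lagrangian, and the special Lagrangian condition with respect to $-id\log\chi_e(\zeta)$ amounts to the pullback of this one-form being real up to a global phase. On the circle $\{|\chi_e(\zeta)|=e^s\}$ we have $\log\chi_e(\zeta)=s+i\arg\chi_e(\zeta)$, so the pullback of $-id\log\chi_e(\zeta)$ is $d\arg\chi_e(\zeta)$, which is real. Each such circle is therefore special Lagrangian in the annulus, and Gross's theorem lifts it to a special Lagrangian $T_{s,\lambda}$ in $M$. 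Where the $S^1$-action is free on $T_{s,\lambda}$, the latter is an $S^1$-bundle over $S^1$, i.e.\ a two-torus, so $\Psi$ is a special Lagrangian torus fibration away from the $S^1$-fixed locus.

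The remaining step, and the one requiring care, is identifying the single nodal fiber. The $S^1$-action is free away from its fixed locus, which, from the coordinate description $(b,\theta_e,\theta_m)\mapsto (b,\theta_e,\theta_m+t)$, consists of the points in the partially compactified $M$ where the $\theta_m$-circle collapses. Since $\psi:M\to B$ has a type $I_1$ fiber over $0\in B$ with monodromy $\gamma_m\mapsto\gamma_m+\gamma_e$, the vanishing cycle at $b=0$ is precisely the class of $\gamma_m$, i.e.\ the $\theta_m$-orbit; it contracts to a single point in $\psi^{-1}(0)$, which sits at $b=0,\theta_e=0$, equivalently $\log|\chi_e(\zeta)|=0$ and $\mu_{S^1}=0$. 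Thus $T_{s,\lambda}$ is a smooth special Lagrangian two-torus for $(s,\lambda)\neq(0,0)$ and $T_{0,0}$ contains the unique nodal point. The main obstacle is pinning down the local model at $b=0$ to confirm both that the $\theta_m$-cycle (and no other) vanishes there and that the compactification is consistent with the description of $\chi_e(\zeta)$ and $\mu_{S^1}$ as functions with a common zero at the node; everything else is a direct application of Gross's construction.
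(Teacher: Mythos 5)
Your proposal follows essentially the same route as the paper: the paper's proof of this proposition consists precisely of computing the reduced space $M_{red,\lambda}$ and the reduced holomorphic volume form $\Omega(\zeta)_{red,\lambda}=-id\log\chi_e(\zeta)$ and then citing Theorem 1.2 of Gross \cite{Gross00}, which is exactly your strategy (you simply spell out more of the details that the paper leaves implicit, such as the $S^1$-invariance of $\Omega(\zeta)$ and the identification of the circles $|\chi_e(\zeta)|=e^s$ as the special Lagrangians downstairs). One small slip that does not affect the argument: the instanton correction to $\chi_m(\zeta)$ does depend on $\theta_e$ through $\chi_e(\zeta')$, but what matters for invariance under the $\theta_m$-rotation is only its independence of $\theta_m$, which holds.
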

In fact, we have
$$\log|\chi_e(\zeta)|=\frac{2\pi}{\epsilon}\textrm{Re}(\bar{\zeta}b),\
\mu_{S^1}=\frac{2\pi}{\epsilon}\textrm{Im}(\bar{\zeta}b),$$ and thus
$$\Psi=\frac{2\pi\bar{\zeta}}{\epsilon}\psi,$$
where $\psi:M\rightarrow B$ is the elliptic fibration that we start
with. So the image of $\Psi$ is given by
$\frac{2\pi}{\epsilon}B=\{b\in\mathbb{C}:|b|<\frac{2\pi
r}{\epsilon}\}$. We will abuse notations and use $B$ to denote
$\{b\in\mathbb{C}:|b|<\frac{2\pi r}{\epsilon}\}$.

Now, as the base of a Lagrangian torus fibration, $B$ is a
two-dimensional affine manifold with a unique singular point at
$b=0\in B$. This is called the \textit{focus-focus singularity} in
Hamiltonian mechanics (see for example Section 3 in Casta\~{n}o
Bernard-Matessi \cite{CBM06}). There are symplectic affine
coordinates on $B$ defined as follows (see Hitchin \cite{Hitchin97}
for details). First let $\{\gamma_e^*,\gamma_m^*\}$ be the basis of
$H_1(T_{s,\lambda},\mathbb{Z})$ dual to
$\{\gamma_e,\gamma_m\}\subset H^1(T_{s,\lambda},\mathbb{Z})$. For
every tangent vector $\nu$ on $B$, lift it to a normal vector field
(which we again denoted by $\nu$) on $T_{s,\lambda}$. Then the
1-forms
$$\omega_e(\zeta)(\nu)=\int_{\gamma_e^*}\iota_\nu\omega(\zeta),\
\omega_m(\zeta)(\nu)=\int_{\gamma_m^*}\iota_\nu\omega(\zeta),$$ on
$B$ are closed, and thus there are locally defined coordinates
$\phi_e(\zeta)$, $\phi_m(\zeta)$ on $B$ such that
$d\phi_e(\zeta)=\omega_e(\zeta), d\phi_m(\zeta)=\omega_m(\zeta)$.
These are called the \textit{symplectic affine coordinates} on $B$
with respect to the basis $\{\gamma_e^*,\gamma_m^*\}$.
\begin{prop}
The symplectic affine coordinates on $B$ with respect to the basis
$\{\gamma_e^*,\gamma_m^*\}$ are explicitly given by
\begin{eqnarray*}
\phi_m(\zeta) & = & -\frac{2\pi}{\epsilon}\textrm{Im}(\bar{\zeta}b)\\
\phi_e(\zeta) & = &
-\frac{1}{\epsilon}\textrm{Re}\Bigg[\bar{\zeta}(b\log\frac{b}{r}-b)\Bigg].
\end{eqnarray*}
\end{prop}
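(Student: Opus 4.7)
The plan is to extract the two 1-forms $\omega_e(\zeta)$ and $\omega_m(\zeta)$ directly from the explicit expression for the K\"{a}hler form $\omega(\zeta)$ at $|\zeta|=1$ obtained in the proof of the preceding lemma. Setting $|\zeta|=1$ collapses that expression into three terms: one of the form $d\textrm{Im}(\bar\zeta b)\wedge d\theta_m$, one of the form $d\textrm{Im}(\bar\zeta b)\wedge A$, and one of the form $V\,d\textrm{Re}(\bar\zeta b)\wedge d\theta_e$. I will fix a tangent vector on $B$, lift it to the corresponding coordinate vector field $\nu$ on $M$ in the chart $(b_1,b_2,\theta_e,\theta_m)$, contract $\omega(\zeta)$ with $\nu$, and integrate the resulting 1-form over the cycles $\gamma_m^*$ and $\gamma_e^*$, taken to be the $\theta_m$- and $\theta_e$-orbit loops on $T_{s,\lambda}$ normalized so that $\langle\gamma_e,\gamma_e^*\rangle = \langle\gamma_m,\gamma_m^*\rangle = 1$.

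The formula for $\phi_m(\zeta)$ comes out essentially for free. On $\gamma_m^*$ only $d\theta_m$ is nonzero, and only the first of the three terms of $\omega(\zeta)$ carries a $d\theta_m$ (the connection 1-form $A$ on the Gibbons-Hawking base has no $d\theta_m$-component). Contracting with $\nu$ and integrating immediately yields $\omega_m(\zeta) = -\frac{2\pi}{\epsilon}d\textrm{Im}(\bar\zeta b)$, which is exact with primitive $\phi_m(\zeta) = -\frac{2\pi}{\epsilon}\textrm{Im}(\bar\zeta b)$.

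The formula for $\phi_e(\zeta)$ requires more work. On $\gamma_e^*$ only $d\theta_e$ is nonzero, so the contributions come from the $V\,d\textrm{Re}(\bar\zeta b)\wedge d\theta_e$ term and from the $d\theta_e$-piece of the $A$-term, supplied by the component $db_3 = \frac{\epsilon}{2\pi}d\theta_e$ of $A$. After contracting and integrating, one obtains a 1-form on $B$ whose coefficients involve the $\theta_e$-averages $\bar V$ and $\bar A_3$ of $V$ and of the $db_3$-coefficient of $A$. The main obstacle is to evaluate these averages; I plan to do so using the explicit Ooguri-Vafa formulas for $V$ and $\alpha$ recorded in Remark~\ref{rmk3.1}. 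The nonzero Fourier modes in the $b_3$-direction (the disc-instanton summands) average to zero, and what survives is the semi-flat part, $\bar V \sim -\frac{1}{2\pi\epsilon}\log|b/r|$ and, after a suitable gauge choice, $\bar A_3 \sim -\frac{1}{2\pi\epsilon}\arg(b)$. Finally, using $\partial_b(b\log\frac{b}{r}-b) = \log(b/r)$ to repackage the real and imaginary parts of $\log(b/r)$ as the real part of a single holomorphic expression, one recognizes the integrated 1-form as $d\bigl[-\frac{1}{\epsilon}\textrm{Re}(\bar\zeta(b\log\frac{b}{r}-b))\bigr]$, yielding the stated $\phi_e(\zeta)$.
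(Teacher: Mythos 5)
Your proposal is correct and follows essentially the same route as the paper: both start from the explicit expression for $\omega(\zeta)$ at $|\zeta|=1$, read off $\omega_m(\zeta)$ immediately from the $d\theta_m$-term, and obtain $\omega_e(\zeta)$ by observing that the instanton (nonzero Fourier mode) parts of $V$ and $A$ average to zero over the $\gamma_e^*$-cycle, so only the semi-flat parts $V^\textrm{sf}$, $A^\textrm{sf}$ from Remark~\ref{rmk3.1} contribute. The final repackaging of $\log|b/r|$ and $\arg(b)$ into $d\,\textrm{Re}\bigl[\bar\zeta(b\log\frac{b}{r}-b)\bigr]$ is exactly the computation the paper performs.
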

\begin{proof}
Since $|\zeta|=1$, we have
$$\omega(\zeta)=-\frac{4\pi^2}{\epsilon}\textrm{Im}(\bar\zeta\omega_+)=-\frac{4\pi^2}{\epsilon}
d\textrm{Im}(\bar\zeta b)\wedge(\frac{d\theta_m}{2\pi}+A)+2\pi
Vd\textrm{Re}(\bar\zeta b)\wedge d\theta_e.$$ As $V=V(b_1,b_2,b_3)$
and $A=A(b_1,b_2,b_3)$ are independent of $\theta_m$ (recall that
$b_3=\epsilon\theta_e/2\pi$), it is easy to see that
$$d\phi_m(\zeta)=\omega_m(\zeta)=-\frac{2\pi}{\epsilon}d\textrm{Im}(\bar\zeta b).$$
Hence, we can take $\phi_m(\zeta)=
-\frac{2\pi}{\epsilon}\textrm{Im}(\bar{\zeta}b)$.

On the other hand, as will be seen in Remark~\ref{rmk3.1}, we can
decompose $V$ and $A$ into sums of semi-flat and instanton parts,
i.e. $V=V^\textrm{sf}+V^\textrm{inst},\
A=A^\textrm{sf}+A^\textrm{inst}$. And observe that both
$V^\textrm{inst}$ and $A^\textrm{inst}$ are periodic in $\theta_e$,
so we have
$$\int_{\gamma_e^*}\iota_\nu\omega(\zeta)=\int_{\gamma_e^*}\iota_\nu
\Bigg(-\frac{4\pi^2}{\epsilon}d\textrm{Im}(\bar\zeta b)\wedge
A^\textrm{sf}+2\pi V^\textrm{sf}d\textrm{Re}(\bar\zeta b)\wedge
d\theta_e\Bigg).$$ Now, by the explicit formulas for $V^\textrm{sf}$
and $A^\textrm{sf}$ in Remark~\ref{rmk3.1}, we compute
\begin{eqnarray*}
&   & -\frac{4\pi^2}{\epsilon}d\textrm{Im}(\bar\zeta b)\wedge
A^\textrm{sf}+2\pi V^\textrm{sf}d\textrm{Re}(\bar\zeta b)\wedge
d\theta_e\\
& = &
\frac{-i}{2\epsilon}\Bigg(\log\frac{b}{r}-\log\frac{\bar{b}}{r}\Bigg)d\textrm{Im}(\bar\zeta
b)\wedge d\theta_e-\frac{1}{2\epsilon}\Bigg(\log\frac{b}{r}+
\log\frac{\bar{b}}{r}\Bigg)d\textrm{Re}(\bar\zeta b)\wedge
d\theta_e\\
& = &
-\frac{1}{\epsilon}d\textrm{Re}\Bigg[\bar{\zeta}(b\log\frac{b}{r}-b)\Bigg]\wedge
d\theta_e.
\end{eqnarray*}
Hence,
$$d\phi_e(\zeta)=\omega_e(\zeta)=-\frac{1}{\epsilon}d\textrm{Re}\Bigg[\bar{\zeta}(b\log\frac{b}{r}-b)\Bigg],$$
and we can take
$\phi_e(\zeta)=-\frac{1}{\epsilon}\textrm{Re}\big[\bar{\zeta}(b\log\frac{b}{r}-b)\big]$.
\end{proof}
\begin{nb}$\mbox{}$
\begin{enumerate}
\item[(1)] Notice that the symplectic affine coordinates are of the form
stated by Casta\~{n}o Bernard-Matessi on p.511 in \cite{CBM06}, as
expected.
\item[(2)] In the same way, one can show that the complex affine coordinates
(which correspond to periods of the form
$\textrm{Im}(\Omega(\zeta))$), with respect to the basis
$\{-\gamma_e^*,\gamma_m^*\}$, are given by
$$\frac{2\pi}{\epsilon}\textrm{Re}(\bar\zeta
b)=\log|\chi_e(\zeta)|,\
\frac{1}{\epsilon}\textrm{Im}\big[\bar{\zeta}(b\log\frac{b}{r}-b)\big]=\log|\chi_m^\textrm{sf}(\zeta)|.$$
\item[(3)]
The central charge $Z:\Gamma\rightarrow\mathbb{C}$ satisfies the
following relations:
\begin{eqnarray*}
\int_{\gamma_m^*}\omega_+=\frac{1}{2\pi}dZ_e,\
\int_{\gamma_e^*}\omega_+=-\frac{1}{2\pi}dZ_m.
\end{eqnarray*}
If we define $\check{Z}:\Gamma^\vee\rightarrow\mathbb{C}$ by setting
$Z(\gamma_m^*)=Z_e$ and $Z(\gamma_e^*)=-Z_m$, then $\check{Z}$
agrees with the definition of the central charge for a complex
integrable system given by Kontsevich-Soibelman in Section 2.7 in
\cite{KS08}.
\end{enumerate}
\end{nb}

Having constructed a special Lagrangian torus fibration
$\Psi:M(\zeta)\rightarrow B$ and computed the symplectic affine
coordinates on the base $B$, let us recall the construction of the
mirror manifold $\check{M}(\zeta)$ (as a complex manifold) as
suggested by the SYZ Conjecture \cite{SYZ96}. First of all, consider
the moduli space of pairs $(T_{s,\lambda},\nabla)$, where
$T_{s,\lambda}$ is a nonsingular special Lagrangian torus fiber and
$\nabla$ is a flat $U(1)$-connection on the trivial complex line
bundle over $T_{s,\lambda}$. The mirror manifold $\check{M}(\zeta)$
should be, at least topologically, a partial compactification of
this moduli space. More precisely, $\check{M}(\zeta)$ should contain
the quotient $TB'/\Gamma$ of the tangent bundle of
$B'=B\setminus\{0\}$ by the lattice $\Gamma$ (while $M(\zeta)$
contains $T^*B'/\Gamma^\vee$) as a dense open subset. And, given the
symplectic affine coordinates $\phi_m(\zeta)$, $\phi_e(\zeta)$ on
$B'\subset B$, the complex coordinates on
$TB'/\Gamma\subset\check{M}(\zeta)$ are naturally given by
exponentiating the complexified coordinates, so we set
$$w=\exp(\phi_m(\zeta)+i\check{\theta}_m),\
u^\textrm{sf}=\exp(-\phi_e(\zeta)-i\check{\theta}_e).$$ These give
local complex coordinates on the open dense subset $TB'/\Gamma$ in
the mirror manifold $\check{M}(\zeta)$. However, while the
coordinate $w$ is globally defined on $\check{M}(\zeta)$, the other
coordinate $u^\textrm{sf}$ does not extend to a global coordinate
due to nontrivial monodromy around $b=0\in B$: $u^\textrm{sf}\mapsto
u^\textrm{sf}w$.\footnote{Since the monodromy of $\theta_e,\theta_m$
around $b=0\in B$ is given by the matrix $T=\left(\begin{array}{cc}
1 & 1 \\
0 & 1
\end{array}\right)$, the monodromy of the dual coordinates $\check\theta_e$,
$\check\theta_m$ should be given by the matrix
$(T^{-1})^t=\left(\begin{array}{cc}
1 & 0 \\
-1 & 1
\end{array}\right)$, i.e.
$\check\theta_e\mapsto\check\theta_e-\check\theta_m$,
$\check\theta_m\mapsto\check\theta_m$.}

In fact, this is a general phenomenon: When the special Lagrangian
torus fibration $M\rightarrow B$ admits singular fibers ($T_{0,0}$
in our case), the local complex coordinates on the open dense subset
$TB'/\Gamma$ of the mirror $\check{M}\rightarrow B$ given by
exponentiating the complexification of the symplectic affine
coordinates on the smooth part $B'$ of the base $B$ cannot be
extended to the whole mirror manifold $\check{M}$ due to nontrivial
monodromy around the singular locus $\Delta=B\setminus B'$. To
obtain the correct complex coordinates on the mirror manifold, we
must incorporate the information of the singular special Lagrangian
fibers and nontrivial holomorphic discs with boundary on the smooth
special Lagrangian torus fibers (disc instantons). More precisely,
we need to modify the gluing of the local complex charts on the
mirror manifold by disc instanton corrections according to certain
wall-crossing formulas. This approach of constructing the corrected
mirror manifolds was first suggested by Kontsevich and Soibelman in
\cite{KS04} in the two dimensional case (K3 surfaces). Later this
was studied and generalized by Gross and Siebert \cite{GS07} to
higher dimensional cases. Explicit examples which indicate directly
the relation of the gluing formulas to holomorphic discs instantons
were first given by Auroux in \cite{Auroux07}, \cite{Auroux09}.

To carry out the construction of the instanton-corrected mirror
manifold in our case, we shall first determine which special
Lagrangian torus fibers bound nontrivial holomorphic discs. We have
the following proposition (see also Lemma 5.4 in Auroux
\cite{Auroux07}).
\begin{prop}
The special Lagrangian torus $T_{s,\lambda}$ bounds a nontrivial
$J(\zeta)$-holomorphic disc $\varphi:(D^2,\partial
D^2)\rightarrow(M(\zeta),T_{s,\lambda})$ if and only if $s=0$.
\end{prop}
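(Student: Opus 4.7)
The plan is to prove the two directions separately. The ``only if'' direction will follow from two successive applications of the maximum modulus principle, using the globally defined holomorphic functions $\chi_e(\zeta)$ and $\chi_m(\zeta)$. The ``if'' direction will be established by constructing an explicit Lefschetz thimble built from the family of vanishing cycles of the $I_1$ fibration.

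For the ``only if'' direction, suppose $\varphi:(D^2,\partial D^2)\to(M(\zeta), T_{s,\lambda})$ is a nonconstant $J(\zeta)$-holomorphic disc with $s\ne 0$. Since $\chi_e(\zeta)$ is a $\mathbb{C}^*$-valued global holomorphic function on $M$ and $|\chi_e(\zeta)|=e^s$ on $T_{s,\lambda}$, applying the maximum modulus principle to both $\chi_e\circ\varphi$ and $(\chi_e\circ\varphi)^{-1}$ forces $|\chi_e\circ\varphi|\equiv e^s$ on $D^2$. The open mapping theorem then yields $\chi_e\circ\varphi\equiv c$ for some constant $c$, so $\varphi$ factors through the Riemann surface $F_c=\chi_e^{-1}(c)$. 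For $s\ne 0$, the projection of $F_c$ to $B$ is a straight line avoiding $0\in B$, so the monodromy $\chi_m\mapsto\chi_e\chi_m$ is trivial on $F_c$ and $\chi_m(\zeta)$ restricts there to a single-valued holomorphic function. On the boundary circle $C=F_c\cap T_{s,\lambda}$ both the base point $b$ and the angle $\theta_e$ are fixed, which by inspection of the explicit formulas makes $|\chi_m^{\textrm{sf}}|$ and the instanton correction factor each of constant modulus; hence $|\chi_m|$ is constant on $C$. A second application of the maximum modulus principle now yields $\chi_m\circ\varphi$ constant as well. Since $\Omega(\zeta)=d\chi_e/\chi_e\wedge d\chi_m/\chi_m$ is nondegenerate, $(\chi_e,\chi_m)$ are local holomorphic coordinates on $M$, so $\varphi$ must be constant, contradicting our assumption.

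For the ``if'' direction ($s=0$), the torus $T_{0,\lambda}$ sits over $b_0=\frac{i\epsilon\lambda}{2\pi}\zeta\in B$. I would exhibit the disc as the Lefschetz thimble over the straight-line path $b_t=(1-t)b_0$ in $B$: it is swept out by the vanishing cycles $\gamma_e(\psi^{-1}(b_t))$ which shrink to the node at $t=1$, yielding a topological disc with boundary class $\gamma_e$ on $T_{0,\lambda}$. The distinguishing feature of this path is that $\bar\zeta b_t$ is purely imaginary throughout---the path lies entirely in the wall $\{s=0\}$---and this is precisely the BPS-phase matching condition ensuring the thimble is $J(\zeta)$-holomorphic rather than merely smooth. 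The main technical obstacle is verifying this holomorphicity, which I would do either by passing to a local model of $M$ near the $I_1$ singularity (for instance $\{uv=b\}\subset\mathbb{C}^3$) and checking $J(\zeta)$-invariance of the tangent spaces of the thimble, or alternatively by recognizing it as the unique Maslov-index-$2$ holomorphic disc in class $\gamma_e$ whose central charge $Z(\gamma_e)=b_0$ aligns with the phase of $\zeta$.
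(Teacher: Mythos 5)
Your first step coincides with the paper's: the maximum principle applied to $\chi_e(\zeta)\circ\varphi$ (and its reciprocal) forces the disc into a single fiber $F_c=\chi_e(\zeta)^{-1}(c)$ with $|c|=e^s$. After that the two arguments diverge, and both halves of yours have problems. In the ``only if'' direction, your second application of the maximum principle rests on the claim that $\chi_m(\zeta)$ restricts to a single-valued holomorphic function on $F_c$ once the line $\textrm{Re}(\bar\zeta b)=\epsilon s/2\pi$ avoids $0\in B$. That addresses only the monodromy of $\chi_m^\textrm{sf}$; it ignores the fact that for fixed $\zeta$ the corrected coordinate $\chi_m(\zeta)$ is \emph{discontinuous} across the locus $b/\zeta\in\mathbb{R}$ --- this is precisely the content of (\ref{wc+}), (\ref{wc-}) read as a statement on $M$ for fixed $\zeta$, i.e.\ the wall-crossing the paper is about. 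The line $\textrm{Re}(\bar\zeta b)=\epsilon s/2\pi$ with $s\neq0$ meets that locus in exactly one point, so $\chi_m|_{F_c}$ jumps by the factor $1+c^{\pm1}$ across a circle in $F_c$ (and for $\lambda=0$ that circle is your boundary circle $C$ itself). The step is repairable --- since $\chi_e\equiv c$ on $F_c$ the jump is by a \emph{constant}, so rescaling $\chi_m$ on one side produces a genuine holomorphic function on $F_c$ to which your argument applies --- but as written the key claim is false, and it is the one discontinuity one cannot afford to overlook here. The paper sidesteps all of this by simply observing that $F_c$ for $c\neq1$ is biholomorphic to an annulus, which contains no nontrivial holomorphic disc with boundary on the relevant circle.

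In the ``if'' direction you have a plan rather than a proof: you identify the correct candidate disc (the thimble over the segment from $b_0$ to $0$, bounded by the vanishing cycles), but you yourself label its $J(\zeta)$-holomorphicity ``the main technical obstacle'' and leave it to an uncompleted local-model computation or an unproved uniqueness assertion. The paper's point is that no such computation is needed: the fiber $\chi_e(\zeta)^{-1}(1)$ is the unique singular fiber of the globally defined $J(\zeta)$-holomorphic function $\chi_e(\zeta)$, it is reducible and biholomorphic to two discs meeting at the node, and your thimble is exactly the sub-disc of one component cut out by $T_{0,\lambda}$; being part of a fiber of a holomorphic map from a complex surface, it is automatically a holomorphic curve. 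Since you already used $\chi_e(\zeta)$ to trap the disc inside a fiber in the first half, this is the observation your argument is missing.
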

\begin{proof}
Consider the map $f_\zeta=\chi_e(\zeta):M\rightarrow\mathbb{C}^*$.
Note that the image of $f_\zeta$ is the annulus
$\{w\in\mathbb{C}:\exp(-2\pi r/\epsilon)\leq|w|\leq\exp(2\pi
r/\epsilon)\}$.

Now, suppose that $T_{s,\lambda}$ bounds a nontrivial holomorphic
disc $\varphi:(D^2,\partial
D^2)\rightarrow(M(\zeta),T_{s,\lambda})$. Then the composite map
$f_\zeta\circ\varphi:(D^2,\partial
D^2)\rightarrow(\mathbb{C}^*,\{|w|=e^s\})$ is a holomorphic map. By
the maximum principle, $f_\zeta\circ\varphi$ must be a constant map.
Hence, the image of the holomorphic disc is contained in some fiber
of $f_\zeta$. However, for $w\neq1$, the fiber $f_\zeta^{-1}(w)$ is
biholomorphic to an annulus, and so cannot contain any nontrivial
holomorphic disc. So we must have $e^s=1$ or $s=0$.

Conversely, observe that $f_\zeta^{-1}(1)$ is reducible and
biholomorphic to the union of two discs. Thus, for $\lambda\neq0$,
$T_{0,\lambda}$ indeed bounds a nontrivial holomorphic disc which is
contained entirely in the fiber $f_\zeta^{-1}(1)$.
\end{proof}
We remark that, for $\lambda>0$, the special Lagrangian torus
$T_{0,\lambda}$ boounds a nontrivial holomorphic disc with
symplectic area $\lambda$. Denote by $\beta$ the relative homotopy
class of this disc. By deforming $T_{s,\lambda}$ continuously to
$T_{0,\lambda}$ and setting
$$z_\beta(T_{s,\lambda},\nabla)=\exp\Bigg(-\int_\beta\omega(\zeta)\Bigg)\textrm{hol}_\nabla(\partial\beta),$$
we get a globally defined holomorphic function $z_\beta$ on
$\check{M}(\zeta)$ which is nothing but the coordinate $w$ given
above. For $\lambda<0$, the holomorphic disc bounded by
$T_{0,\lambda}$ has area $-\lambda$, and the corresponding
holomorphic coordinate on the mirror is
$z_{-\beta}=z_\beta^{-1}=w^{-1}=\chi_e(-i\zeta)^{-1}$.

We can now construct the instanton-corrected mirror of
$M(\zeta)=(M,\omega(\zeta),\Omega(\zeta))$, following the approach
of Auroux \cite{Auroux07}, \cite{Auroux09}. By the above
proposition, we know that wall-crossing occurs at the wall $\{b\in
B:\textrm{Re}(\bar{\zeta}b)=0\}$. We remark that if we use the
complex affine coordinates on $B$, then the wall is the straight
line in $B$ invariant under monodromy. Now, the wall divides $B$
into two chambers: $B_1$ and $B_2$, as shown in
Figure~\ref{chambers}.

\begin{figure}[htp]
\setlength{\unitlength}{1mm}
\begin{picture}(100,40)
\curve(50,0, 50,4) \curve(50,5, 50,9) \curve(50,10, 50,14)
\curve(50,15, 50,19) \curve(50,20, 50,24) \curve(50,25, 50,29)
\curve(50,30, 50,34) \curve(50,35, 50,39) \put(49,40){$R_+$}
\put(49,-4){$R_-$} \put(30,20){$B_1$}
\put(25,13.5){$u^\textrm{sf}_1=u^\textrm{sf}_2$} \put(65,20){$B_2$}
\put(60,13.5){$u^\textrm{sf}_1=u^\textrm{sf}_2w$}
\put(48.49,18.7){$\times$} \put(43.5,34.5){$u^-_{R_+}$}
\put(51.5,34.5){$u^+_{R_+}$} \put(43.5,3.5){$u^+_{R_-}$}
\put(51.5,3.5){$u^-_{R_-}$}
\end{picture}
\caption{}\label{chambers}
\end{figure}

\noindent On $B\setminus\{b\in B:\textrm{Re}(\bar\zeta b)=0\textrm{
and Im}(\bar\zeta b)\geq0\}$ and $B\setminus\{b\in
B:\textrm{Re}(\bar\zeta b)=0\textrm{ and Im}(\bar\zeta b)\leq0\}$,
we choose different branches of $\log$, say $\log_1$ and $\log_2$,
so that $\log_1=\log_2$ on $B_1$ and $\log_1=\log_2+2\pi i$ on
$B_2$. Denote by
$$\phi_e^k(\zeta)=-\frac{1}{\epsilon}\textrm{Re}\Bigg[\bar{\zeta}(b\log_k\frac{b}{r}-b)\Bigg],\
u^\textrm{sf}_k=\exp(-\phi_e^k(\zeta)-i\check\theta_e)$$ the
coordinates corresponding to the branch $\log_j$, for $j=1,2$. Hence
the gluing of the complex charts of $\check{M}(\zeta)$ defined by
the two sets of coordinates $(w,u^\textrm{sf}_1)$ and
$(w,u^\textrm{sf}_2)$ are given by
\begin{equation*}
\left\{\begin{array}{ll}
u^\textrm{sf}_1=u^\textrm{sf}_2 & \textrm{on $B_1$,}\\
u^\textrm{sf}_1=u^\textrm{sf}_2w & \textrm{on $B_2$,}
\end{array} \right.
\end{equation*}
and this clearly does not define a global holomorphic coordinate.

What we need to do is to modify the gluing across the wall
$\textrm{Re}(\bar{\zeta}b)=0$ by disc instanton corrections as
follows. Consider the rays
\begin{eqnarray*}
R_+ & = & \{b\in B:\textrm{Re}(\bar{\zeta}b)=0\textrm{ and
Im}(\bar{\zeta}b)>0\},\\
R_- & = & \{b\in B:\textrm{Re}(\bar{\zeta}b)=0\textrm{ and
Im}(\bar{\zeta}b)<0\}.
\end{eqnarray*}
Over the chamber $B_1$, let $u^-_{R_+}$ be the coordinate
$u^\textrm{sf}_1=u^\textrm{sf}_2$ as $b\in B$ approaches $R_+$ in
the clockwise direction, and $u^+_{R_-}$ be the coordinate
$u^\textrm{sf}_1=u^\textrm{sf}_2$ as $b\in B$ approaches $R_-$ in
the counter-clockwise direction. Over the chamber $B_2$, let
$u^+_{R_+}$ be the coordinate $u^\textrm{sf}_2$ as $b\in B$
approaches $R_+$ in the counter-clockwise direction, and $u^-_{R_-}$
be the coordinate $u^\textrm{sf}_1$ as $b\in B$ approaches $R_-$ in
the clockwise direction. (See Figure~\ref{chambers}.) The corrected
gluing should then be given by the following wall-crossing formulas.
\begin{eqnarray}\label{mswc+}
u^-_{R_+} & = & u^+_{R_+}(1+w),\\
u^+_{R_-} & = & u^-_{R_-}(1+w^{-1}).\label{mswc-}
\end{eqnarray}
This defines a global holomorphic coordinate on $\check M(\zeta)$.

Now, we claim that the wall-crossing formulas (\ref{mswc+}),
(\ref{mswc-}) can naturally be identified with the formulas
(\ref{wc+}), (\ref{wc-}) which appear in the construction of
Gaiotto, Moore and Neitzke. Indeed, by hyperk\"{a}hler rotation, we
know a priori that the mirror of the Calabi-Yau 2-fold
$M(\zeta)=(M,\omega(\zeta),\Omega(\zeta))$ should be given by
$\check{M}(\zeta)=M(-i\zeta)=(M,\omega(-i\zeta),\Omega(-i\zeta))$.
Also, observe that we have
\begin{eqnarray*}
\log|\chi_e(-i\zeta)| & = &
\frac{2\pi}{\epsilon}\textrm{Re}(\overline{-i\zeta}b)
=-\frac{2\pi}{\epsilon}\textrm{Im}(\bar{\zeta}b)=\phi_m(\zeta),\\
\log|\chi_m^\textrm{sf}(-i\zeta)| & = &
\frac{1}{\epsilon}\textrm{Im}\Bigg[\overline{-i\zeta}(b\log\frac{b}{r}-b)\Bigg]
=\frac{1}{\epsilon}\textrm{Re}\Bigg[\bar\zeta(b\log\frac{b}{r}-b)\Bigg]=-\phi_e(\zeta).
\end{eqnarray*}
Hence, the coordinates $w$ and $u^\textrm{sf}$ can naturally be
identified with the semi-flat coordinates $\chi_e(-i\zeta)$ and
$\chi_m^\textrm{sf}(-i\zeta)$ respectively. (More precisely, this
means that we have a canonical fiber-preserving diffeomorphism
$M\rightarrow\check{M},\
(b_1,b_2,\theta_e,\theta_m)\mapsto(b_1,b_2,\check{\theta}_m=\theta_e,\check{\theta}_e=-\theta_m)$
between $M\rightarrow B$ and $\check{M}\rightarrow B$ identifying
the semi-flat local coordinates; see also Remark~\ref{rmk3.1}.)

So the two sets of equations (\ref{wc+}), (\ref{wc-}) and
(\ref{mswc+}), (\ref{mswc-}) are both defining a global holomorphic
coordinate on $M(-i\zeta)$ by correcting the semi-flat coordinate
$u^\textrm{sf}=\chi_m^\textrm{sf}(-i\zeta)$, and the corrections
involving $w=\chi_e(-i\zeta)$ are of the same form. The only
difference is that the BPS rays $l_+,l_-$ lie in the $\zeta$-plane,
while $R_+,R_-$ lie in $B$. However, we notice that the rays $R_+$,
$R_-$ can be rewritten as
\begin{eqnarray*}
R_+ & = & \{b\in B:b/(-i\zeta)\in\mathbb{R}_{<0}\},\\
R_- & = & \{b\in B:b/(-i\zeta)\in\mathbb{R}_{>0}\}.
\end{eqnarray*}
Now, when $b$ approaches $R_+$ in the counter-clockwise direction,
the BPS ray $l_+=\{\zeta':b/\zeta'\in\mathbb{R}_{<0}\}$ is rotating
in the $\zeta$-plane in the counter-clockwise direction and
approaching the \textit{fixed} $-i\zeta$. Equivalently, $-i\zeta$ is
approaching $l_+$ in the clockwise direction. Likewise, when $b$ is
approaching $R_+$ in the clockwise direction, $-i\zeta$ is
approaching $l_+$ in the counter-clockwise direction; and similarly
for $l_-$ and $R_-$. We therefore come to the main conclusion of
this note:\\

\noindent\textit{Suppose that $|\zeta|=1$. Then the equations
(\ref{wc+}), (\ref{wc-}), with $\zeta$ replaced by $-i\zeta$, which
describe the discontinuity of the holomorphic coordinate
$\chi_m(-i\zeta)$ across the BPS rays $l_\pm$, are equivalent to the
wall-crossing formulas (\ref{mswc+}), (\ref{mswc-}) which appear in
the construction of the instanton-corrected mirror of
$M(\zeta)=(M,\omega(\zeta),\Omega(\zeta))$.\\}

\noindent In particular, we now see clearly how disc instanton
corrections (given by nontrivial holomorphic discs with boundary on
special Lagrangian torus fibers) contribute to the construction of
the Ooguri-Vafa metric.

We end this note by a couple of remarks.
\begin{nb}\label{rmk3.2}
As we mentioned in the introduction, in the case of the Ooguri-Vafa
metric, the Kontsevich-Soibelman wall-crossing formula of first kind
is trivial. This is because the wall of first kind is empty and thus
the numerical Donaldson-Thomas invariants, which are given by an
integer-valued function $\Omega:\Gamma\rightarrow\mathbb{Z}$, is
constant.\footnote{Caution: Do not confuse the $\Omega$ here with
the holomorphic two-form $\Omega(\zeta)$.} More precisely, we have,
for all $b\in B$, $\Omega(\gamma_e)=\Omega(-\gamma_e)=1$ and
$\Omega(\gamma)=0$ for any $\gamma\not\in\{\pm\gamma_e\}$. In turn,
this should be interpreted as the fact that only $\pm\gamma_e$, now
regarded as elements in $H_1(\check{M}(\zeta)_b,\mathbb{Z})$, bounds
nontrivial holomorphic discs in $\check{M}(\zeta)$ with boundary on
the dual special Lagrangian torus fibers. This is closely related to
the comment stated in 1.5(2) on p.16 in \cite{KS08}, where
Kontsevich-Soibelman speculated that the numerical Donaldson-Thomas
invariants $\Omega(\gamma)$ should be counting certain holomorphic
discs in $\check{M}(\zeta)$ "near infinity". As pointed out to me by
Yan Soibelman, one interesting question is to interpret the
wall-crossing formulas in terms of $3d$ Calabi-Yau categories.
\end{nb}
\begin{nb}\label{rmk3.1}[The Ooguri-Vafa metric and SYZ mirror transformations]
In \cite{GMN08}, Gaiotto-Moore-Neitzke decomposed the positive
harmonic function $V=V(b_1,b_2,b_3)$ and the 1-form
$A=A(b_1,b_2,b_3)$ into a sum of semi-flat part and instanton part
(see also \cite{OV96}). More precisely, we can write
\begin{eqnarray*}
V & = & V^\textrm{sf}+V^\textrm{inst},\\
A & = & A^\textrm{sf}+A^\textrm{inst},
\end{eqnarray*}
where
\begin{eqnarray*}
V^\textrm{sf} & = &
-\frac{1}{4\pi\epsilon}\Bigg(\log\frac{b}{r}+\log\frac{\bar{b}}{r}\Bigg),\\
V^\textrm{inst} & = &
\frac{1}{2\pi\epsilon}\sum_{n\neq0}K_0\Bigg(\frac{2\pi}{\epsilon}|nb|\Bigg)e^{in\theta_e},\\
A^\textrm{sf} & = &
\frac{i}{8\pi^2}\Bigg(\log\frac{b}{r}-\log\frac{\bar{b}}{r}\Bigg)d\theta_e,\\
A^\textrm{inst} & = &
-\frac{1}{4\pi\epsilon}(\frac{db}{b}-\frac{d\bar{b}}{\bar{b}})\sum_{n\neq0}(\textrm{sgn}\
n)|b|K_1\Bigg(\frac{2\pi}{\epsilon}|nb|\Bigg)e^{in\theta_e},
\end{eqnarray*}
and $K_0,K_1$ are generalized Bessel functions. Accordingly, we can
decompose the holomorphic two form $\Omega(\zeta)$ and the
symplectic form $\omega(\zeta)$ into a sum of semi-flat and
instanton parts.
\begin{eqnarray*}
\Omega(\zeta) & = & \Omega^\textrm{sf}(\zeta)+\Omega^\textrm{inst}(\zeta),\\
\omega(\zeta) & = &
\omega^\textrm{sf}(\zeta)+\omega^\textrm{inst}(\zeta).
\end{eqnarray*}
It is straightforward to show that we have
$$\Omega^\textrm{sf}(\zeta)=\frac{d\chi_e^\textrm{sf}(\zeta)}{\chi_e^\textrm{sf}(\zeta)}
\wedge\frac{d\chi_m^\textrm{sf}(\zeta)}{\chi_m^\textrm{sf}(\zeta)},$$
which agrees with the formula in Section~\ref{sec2}.

In the case $|\zeta|=1$, we compute the semi-flat parts
$\Omega^\textrm{sf}(\zeta)$ and $\omega^\textrm{sf}(\zeta)$, and
they are respectively given by
\begin{eqnarray*}
\Omega^\textrm{sf}(\zeta) & = &
\Bigg(\frac{2\pi}{\epsilon}d\textrm{Re}(\bar{\zeta}b)+id\theta_e\Bigg)\wedge\Bigg(\frac{2\pi}{\epsilon}d
\textrm{Re}(\bar{\zeta}Z_m)+id\theta_m\Bigg),\\
\omega^\textrm{sf}(\zeta) & = &
-\frac{2\pi}{\epsilon}d\textrm{Im}(\bar{\zeta}b)\wedge
d\theta_m+\frac{2\pi}{\epsilon}d\textrm{Im}(\bar{\zeta}Z_m)\wedge
d\theta_e,
\end{eqnarray*}
Now, if we set
$\check{\theta}_e=-\theta_m,\check{\theta}_m=\theta_e$ (they are the
dual fiber coordinates on $\check{M}=M$), then
\begin{eqnarray*}
\omega^\textrm{sf}(-i\zeta) & = &
\frac{2\pi}{\epsilon}d\textrm{Im}(i\bar{\zeta}b)\wedge
d(-\theta_m)+\frac{2\pi}{\epsilon}d\textrm{Im}(i\bar{\zeta}Z_m)\wedge
d\theta_e\\
& = & \frac{2\pi}{\epsilon}d\textrm{Re}(\bar{\zeta}b)\wedge
d\check{\theta}_e+\frac{2\pi}{\epsilon}d\textrm{Re}(\bar{\zeta}Z_m)\wedge
d\check{\theta}_m.
\end{eqnarray*}
We can then show that
\begin{eqnarray*}
\mathcal{F}^\textrm{sf}(e^{i\omega^\textrm{sf}(-i\zeta)}) & = &
\Omega^\textrm{sf}(\zeta),\\
(\mathcal{F}^\textrm{sf})^{-1}(\Omega^\textrm{sf}(\zeta)) & = &
e^{i\omega^\textrm{sf}(-i\zeta)},
\end{eqnarray*}
where $\mathcal{F}^\textrm{sf}$ is the semi-flat SYZ mirror
transformation introduced in Chan-Leung \cite{Chan-Leung08a},
\cite{Chan-Leung08b}. Moreover, it is easy to see that we have
$$(\mathcal{F}^\textrm{sf})^{-1}(\Omega^\textrm{inst}(\zeta))=0.$$
It is thus very natural to ask whether one can construct an SYZ
mirror transformation $\mathcal{F}$ such that
\begin{eqnarray*}
\mathcal{F}(e^{i\omega(-i\zeta)}) & = & \Omega(\zeta),\\
(\mathcal{F})^{-1}(\Omega(\zeta)) & = & e^{i\omega(-i\zeta)}.
\end{eqnarray*}
This is related to the question of writing down the K\"{a}hler
structure on the mirror manifold in terms of the holomorphic volume
form on the original manifold. We hope to return to this in a later
paper.
\end{nb}

\end{document}